\def\F{\mathbb{F}}
\def\q{\mathcal{Q}}
\def\w{\mathcal{W}}
\def\h{\mathcal{H}}
\DeclareMathOperator{\PG}{PG}
\theoremstyle{definition}
\newtheorem{theorem}{Theorem}[section]
\newtheorem{lemma}[theorem]{Lemma}
\newtheorem{definition}[theorem]{Definition}
\newtheorem{remark}[theorem]{Remark}
\newtheorem{corollary}[theorem]{Corollary}
\newcommand{\comments}[1]{}
\newcommand{\gs}[3]{\genfrac{[}{]}{0pt}{0}{#1}{#2}_{#3}}
\author{Jan De Beule\footnote{Address: Vrije Universiteit Brussel, Department of Mathematics, Pleinlaan 2, B--1050 Brussel, Belgium\newline Email address: jan@debeule.eu Website: www.debeule.eu}~ and Maarten De Boeck\footnote{Address: UGent, Department of Mathematics, Krijgslaan 281 -- S25, 9000 Gent, Flanders. \newline Email address: Maarten.DeBoeck@UGent.be}}
\title{A combinatorial characterisation of embedded polar spaces}
\date{}
\begin{document}
\maketitle

\begin{abstract}
Some classical polar spaces admit polar spaces of the same rank as embedded polar spaces (often arisen as the intersection of the polar space with a non-tangent hyperplane). In this article we look at sets of generators that behave combinatorially as the set of generators of such an embedded polar space, and we prove that they are the set of generators of an embedded polar space.
\end{abstract}

\paragraph*{Keywords:} finite classical polar space
\paragraph*{MSC 2010 codes:} 51A50, 05B25

\section{Introduction}\label{sec:introduction}

A fundamental question in finite geometry is to recognize geometric substructures from combinatorial properties. One of the first questions of this kind was posed (and solved) by Beniamino Segre, who showed that an {\em oval} (which is defined in a combinatorial way) of a finite Desarguesian projective plane of odd order, is necessarily a {\em conic} (which is a non-singular curve of degree two). Comparable questions have been studied in higher dimensional projective spaces and also in finite classical polar spaces, of which the following theorem is an example. 

\begin{theorem}[De Winter and Schillewaert, \cite{DWS2010}]
If a point set $K$ in $\PG(n,q), n \geq 4, q > 2$, has the same intersection numbers with respect to hyperplanes and subspaces of codimension 2 as a polar space $\mathcal{P} \in \{\h(n,q^2),\q^+(2n+1,q),\q^-(2n+1,q),\q(2n,q)\}$, then $K$ is the point set of a non-singular polar space $\mathcal{P}$.
\end{theorem}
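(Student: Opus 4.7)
The plan is threefold: first, propagate the intersection hypotheses down to lower dimensional subspaces by standard double counting; second, show that every point of $K$ carries a well defined tangent hyperplane; third, invoke a classical recognition theorem for embedded polar spaces. To carry out the first step I would iterate the identity
\[
\sum_{H\supset\pi}|K\cap H|=|K|+q\cdot|K\cap\pi|,
\]
taken over the $q+1$ hyperplanes $H$ through a codimension-$2$ subspace $\pi$, and its analogues for subspaces of higher codimension. Combined with the hypothesis on codim-$1$ and codim-$2$ intersection numbers, this controls $|K\cap\pi'|$ for subspaces $\pi'$ of every codimension, and in particular forces lines to meet $K$ in one of the values allowed by $\mathcal{P}$: namely $\{0,1,2,q+1\}$ for a quadric and $\{1,q+1,q^2+1\}$ for $\h(n,q^2)$. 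The assumption $q>2$ enters here to guarantee that the linear systems relating the multiplicities of the various intersection sizes have a unique solution, and determining $|K|$ itself also falls out of this counting.

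Next, fix $P\in K$ and let $T_P$ denote the union of all lines through $P$ which are either tangent to $K$ at $P$ or contained in $K$. A counting over the pencil of hyperplanes containing a fixed line through $P$, using the line intersection numbers from the previous step, should show that $T_P$ is a hyperplane and that $K\cap T_P$ is a cone with vertex $P$ over a point set whose intersection numbers match those of a polar space of rank one less than $\mathcal{P}$. This is the main obstacle of the argument: the combinatorial data must produce the sharp linearity of $T_P$, and it is precisely here that the codim-$2$ hypothesis is crucial, since hyperplane data alone would only cut out a two-character set, a much broader class than the polar spaces under consideration.

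Finally, with the polarity-like map $P\mapsto T_P$ in hand, I would either run an induction on $n$ applied to the residues $K\cap T_P$, or appeal directly to a Buekenhout--Lef\`evre type recognition theorem for polar spaces embedded in projective space, to conclude that $K$ is the point set of a non-singular polar space; the intersection numbers then identify it with $\mathcal{P}$ itself.
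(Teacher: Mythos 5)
This statement is not proved in the paper at all: it is quoted as background, with the proof residing in De Winter and Schillewaert \cite{DWS2010}. So your proposal has to stand on its own, and as it stands it is an outline of a strategy rather than a proof, with the decisive steps missing. The central gap is in your first step. The identity $\sum_{H\supset\pi}|K\cap H|=|K|+q\,|K\cap\pi|$ is correct, but it (and its analogues) only transfers \emph{aggregate} information one codimension at a time. The hypotheses constrain codimension $1$ and $2$ only; for a codimension-$3$ space $\sigma$ the analogous identity reads $\sum_{\pi\supset\sigma}|K\cap\pi|=(q+1)|K|+q^{2}|K\cap\sigma|$, where the sum runs over the $q^{2}+q+1$ codimension-$2$ spaces through $\sigma$, each of which may a priori take any of the several admissible values. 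This determines only a weighted sum, not $|K\cap\sigma|$, so iterating does \emph{not} ``control $|K\cap\pi'|$ for subspaces of every codimension,'' and in particular the line intersection numbers $\{0,1,2,q+1\}$ (resp.\ $\{1,q+1,q^{2}+1\}$) do not ``fall out'' of this counting. Establishing such local (line-level) behaviour from codimension-$1$ and $2$ data is exactly the hard content of \cite{DWS2010}, which occupies a substantial counting and induction argument; your sketch replaces it with an averaging step that cannot deliver exact values.

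The later steps inherit this problem. The assertion that $T_P$ (the union of tangent and contained lines at $P\in K$) is a hyperplane is flagged by you as ``the main obstacle'' but is only said to follow from a counting that ``should show'' it; without the line intersection numbers from step one there is no starting point for that counting, and even with them the linearity of $T_P$ requires a genuine argument. Finally, an appeal to a Buekenhout--Lef\`evre type recognition theorem presupposes that the point set $K$, together with the lines fully contained in it, already forms a nondegenerate polar space (one-or-all axiom, thick lines, correct rank); verifying those axioms is essentially the same missing local analysis, so the citation does not close the gap. In short, the architecture you describe (lines, tangent hyperplanes, recognition theorem) is a reasonable high-level description of how such characterisations go, but each of its three stages is asserted rather than proved, and the first stage as formulated would fail.
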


In this paper, we deal with a rather particular situation in finite classical polar spaces. Motivated by research in \cite{dss2}, the aim is to recognize embedded finite classical polar spaces as sets of generators of a larger polar space satisfying some combinatorial properties. As such, we want to provide a proof of Theorem 6.6 in \cite{dss2}, and this is the main aim of this paper.

Polar spaces were introduced in an axiomatic way by Veldkamp (\cite{vel1,vel2}) and Tits (\cite{tits}). 
A {\em polar space} is a point-line geometry satisfying the one-or-all axiom, i.e. for a given point $P$ and a given line $l$ not through $P$, the point $P$ is collinear
with one point of $l$ or with all points of $l$. This characterization is due to Buekenhout and Shult. Although this remarkable characterization is often very useful 
in geometrical and combinatorial proofs of theorems on polar spaces, we will prefer the use of the original definition of polar spaces of Tits, which turns out to
make our proofs shorter.

\begin{definition}\label{defpolarspace}
A \index{polar space}\emph{polar space of rank $d$}, $d\geq3$, is an incidence geometry $(\Pi,\Omega)$ with $\Pi$ a set whose elements are called points and $\Omega$ a set of subsets of $\Pi$ satisfying the following axioms.
\begin{enumerate}
\item Any element $\omega\in\Omega$ together with the elements of $\Omega$ that are contained in 
$\omega$, is a projective geometry of dimension at most $d-1$. 
\item The intersection of two elements of $\Omega$ is an element of $\Omega$ 
(the set $\Omega$ is closed under intersections).
\item For a point $P\in\Pi$ and an element $\omega\in\Omega$ of dimension $d-1$ 
such that $P$ is not contained in $\omega$ there is a unique element $\omega'\in\Omega$ of dimension $d-1$ 
containing $P$ such that $\omega\cap\omega'$ is a hyperplane of $\omega$. The element $\omega$ is the 
union of all 1-dimensional elements of $\Omega$ that contain $P$ and are contained in $\omega$.
\item There exist two elements $\Omega$ both of dimension $d-1$ whose intersection is empty.
\end{enumerate}
\end{definition}

One of the consequences of the theory developed in \cite{tits} is that all polar spaces of rank at least $3$ 
arise from a sesquilinear or quadratic form acting on a vector space over a (skew)field. In the finite case, 
this means that finite polar spaces of rank at least $3$ are known and classified. We assume that the reader is familiar with 
finite classical polar spaces. To fix the notation, we provide a table with the six different families of finite classical
polar spaces including rank and parameter.

The finite field of order $q$, $q=p^h$, $p$ prime and $h \geq 1$, will be denoted as $\F_q$, and the $n$-dimensional
projective space over $\F_q$ as $\PG(n,q)$.

\begin{table}[ht]
\begin{center}
\begin{tabular}{|c|c|c|c|c|c|}
\hline
family & subfamily & notation & ambient space & rank & parameter \\
\hline\hline
\multirow{3}{*}{orthogonal} & elliptic & $\q^-(2n+1,q)$ & $\PG(2n+1,q), n \geq 1$ & $n$ & 2\\
& parabolic & $\q(2n,q)$ & $\PG(2n,q), n \geq 1$ & $n$ & 1\\ 
& hyperbolic & $\q^+(2n+1,q)$ & $\PG(2n+1,q), n \geq 0$ & $n+1$ & 0\\
\hline
\multirow{2}{*}{Hermitian} & odd dimension & $\h(2n+1,q^2)$ & $\PG(2n+1,q^2), n \geq 0$ & $n+1$ & $1/2$ \\
& even dimension & $\h(2n,q^2)$ & $\PG(2n,q^2)$, $n \geq 1$ & $n$ & 3/2 \\
\hline
symplectic & & $\w(2n+1,q)$ & $\PG(2n+1,q)$, $n \geq 0$ & $n+1$ & 1\\
\hline
\end{tabular}
\end{center}
\caption{Different flavors of finite classical polar spaces, parameter and rank}\label{tab:flavor}
\end{table}

A finite classical polar space of rank $d$ over $\F_{q}$ has parameter $e=\log_{q}(x-1)$ with $x$ the 
number of generators through a fixed $(d-2)$-space.  The following lemma summarizes the number
of points and generators of a finite classical polar space using rank, parameter and order of the field.
Recall that $\gs{n+1}{k+1}{q}$ denotes the Gaussian coefficient representing 
the number of $k$-dimensional spaces in $\PG(n,q)$. Note that for $0 \leq m < r$ 
one defines $\gs{m}{r}{q} = 0$. 

\begin{lemma}\label{subspacesonpolar}%\label{subspacethroughonpolar}
	The number of generators of a finite classical polar space of rank $d$ with parameter $e$, embedded in a projective space over $\F_{q}$, is given by $\prod^{d-1}_{i=0}(q^{e+i}+1)$. Its number of points equals $\gs{d}{1}{q}(q^{d+e-1}+1)$.
	\par The number of generators on a classical finite polar space of rank $d$ with parameter $e$, embedded in a projective space over $\F_{q}$, through a fixed point is $\prod^{d-2}_{i=0}(q^{e+i}+1)$.%	\gs{d-m-1}{k-m}{q}\prod^{k-m}_{i=1}(q^{d+e-m-i-1}+1).
\end{lemma}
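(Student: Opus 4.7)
The plan is to induct on the rank $d$, exploiting the well-known consequence of Definition \ref{defpolarspace}(3) that for any point $P \in \Pi$, the elements of $\Omega$ through $P$, quotiented out by $P$, form a polar space $\mathcal{P}_P$ of rank $d-1$ and parameter $e$. The base case is handled by direct inspection of the six families in Table \ref{tab:flavor} at the smallest rank (a finite classical generalised quadrangle of order $(q,q^e)$, which has $(q+1)(q^{e+1}+1)$ points, $(q^e+1)(q^{e+1}+1)$ generators, and $q^e+1$ generators through a point), matching all three formulas.

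For the inductive step, fix a point $P \in \Pi$. The residual identification gives a bijection between generators of $\mathcal{P}$ through $P$ and generators of $\mathcal{P}_P$, so by induction the number of generators on $\mathcal{P}$ through $P$ equals
\[
\prod_{i=0}^{d-2}(q^{e+i}+1),
\]
which is the third assertion. For the point count I would use the tangent hyperplane $T_P$: its intersection $T_P \cap \mathcal{P}$ is a cone with vertex $P$ over a rank-$(d-1)$, parameter-$e$ polar space embedded in a complement of $P$, so $|T_P \cap \mathcal{P}| = 1 + q \cdot n_{d-1,e}$ with $n_{d-1,e}$ given by the induction hypothesis. The remaining points lie on secant lines through $P$ not contained in $T_P$; counting these secants and adding up gives, after simplification, the closed form $\gs{d}{1}{q}(q^{d+e-1}+1)$.

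Finally, double-counting pairs $(P,\pi)$ with $P$ a point on a generator $\pi$, using that each generator carries $\gs{d}{1}{q}$ points, yields
\[
g_{d,e} \cdot \gs{d}{1}{q} \;=\; n_{d,e} \cdot t_{d,e}, \qquad \text{hence} \qquad g_{d,e} = (q^{d+e-1}+1) \cdot g_{d-1,e},
\]
and the formula $\prod_{i=0}^{d-1}(q^{e+i}+1)$ follows by induction. The main obstacle is the secant-line count at $P$: a non-tangent line through $P$ meets $\mathcal{P}$ in $2$ points in the orthogonal and symplectic cases but in $q+1$ points (a Baer subline) in the Hermitian case. One either treats the six families separately or phrases the tally uniformly in $e$; the latter is cleaner but requires care in unifying the two regimes, and this is precisely the step where the combinatorial role of the parameter $e$ becomes essential.
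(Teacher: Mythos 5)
The paper gives no proof of this lemma at all: it is stated as a standard counting result for finite classical polar spaces (of the kind the authors later attribute to \cite{bcn} and \cite{Hirschfeld}), so there is no in-paper argument to compare with. Your inductive skeleton is a legitimate standard route: point residues of a rank-$d$, parameter-$e$ classical polar space are classical of rank $d-1$ and parameter $e$, the base case is the classical generalised quadrangles, and the double count $g_{d,e}\cdot\gs{d}{1}{q}=n_{d,e}\cdot t_{d,e}$ correctly converts the point count and the through-a-point count into the total generator count.

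However, the point-count step --- exactly the one you defer with ``adding up gives, after simplification, the closed form'' --- has a genuine gap, and the auxiliary facts you state for it are partly wrong. In the symplectic case $\w(2n+1,q)$ every point of the ambient space is a point of the polar space, so a line through $P$ not in $P^{\perp}$ carries $q+1$ points of $\mathcal{P}$, not $2$; feeding ``two points per secant'' into your tally gives $q^{2d-2}$ points off $T_P$ instead of the correct $q^{2d-1}$, so the recursion closes on a wrong value there. In the Hermitian case, with the lemma's convention that $q$ is the order of the ambient field, a non-tangent line carries $\sqrt{q}+1$ points, not $q+1$. Moreover the number of lines through $P$ not contained in $T_P$ is $q^{N-1}$ with $N$ the ambient projective dimension, and $N$ is not determined by $(d,e)$ alone (parabolic and symplectic both have $e=1$ but $N=2d$ versus $N=2d-1$), so the secant tally genuinely cannot be phrased uniformly in $d,e$ without per-family input. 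The clean way to close the gap is to isolate and prove (family by family, or directly from the form) the single uniform statement that the number of points of $\mathcal{P}$ outside the tangent hyperplane $T_P$ equals $q^{2d+e-2}$; then $n_{d,e}=1+q\,n_{d-1,e}+q^{2d+e-2}$, induction yields $n_{d,e}=\gs{d}{1}{q}(q^{d+e-1}+1)$, and the remainder of your argument (residue count for generators through a point, double count for the total number of generators) goes through as written.
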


%Note that the number of generators through a fixed point is given by . %Note that the number of generators through a fixed line is given by $\prod^{d-3}_{i=0}(q^{e+i}+1)$.

Consider a polar space $\mathcal{P}$ of rank $d$, defined over $\F_{q}$. Any hyperplane $\pi$ of the ambient projective space which is not a tangent hyperplane to $\mathcal{P}$, contains or intersects the elements of $\mathcal{P}$. The elements completely contained in the hyperplane constitute a finite classical polar space $\mathcal{P}'$ in $\pi$. The polar space $\mathcal{P}'$ may be of the same rank as $\mathcal{P}$, but will have a different parameter. In this paper we are interested in the cases where the rank of $\mathcal{P}'$ equals the rank of $\mathcal{P}$. This restricts us to the following cases: $\q^+(2n-1,q) \subset \q(2n,q)$, $\q(2n,q) \subset \q^-(2n+1,q)$ and $\h(2n-1,q^2) \subset \h(2n,q^2)$.

Using the particular isomorphism between $\q(2n,q)$ and $\w(2n-1,q)$, $q$ even, also the embedding $\q^{+}(2n+1,q)\subset\w(2n+1,q)$, $q$ even, is known. Our result will also include this case.

\begin{definition}\label{defgood}
Let $\mathcal{P}$ be a finite classical polar space of rank $d\geq3$ and with parameter $e\geq1$, 
embedded in a projective space over $\F_{q}$. A set $\mathcal{S}$ of generators of $\mathcal{P}$ is called \emph{strong pseudopolar} if
\begin{enumerate}[label=(\roman*)]
\item for every $i=0,\dots,d$ the number of elements of $\mathcal{S}$ meeting a generator $\pi$ in a $(d-i-1)$-space equals
\[
\begin{cases}
\left(\gs{d-1}{i-1}{q}+q^{i}\gs{d-1}{i}{q}\right)q^{\binom{i-1}{2}+ie-1}&\text{if }\pi\in\mathcal{S}\\
(q^{e-1}+1)\gs{d-1}{i-1}{q}q^{\binom{i-1}{2}+(i-1)e}&\text{if }\pi\notin\mathcal{S}
\end{cases}\;;
\]
\item for every point $P$ of $\mathcal{P}$ there is a generator $\pi\notin\mathcal{S}$ through $P$;
\item for every point $P$ of $\mathcal{P}$ and every generator $\pi\notin\mathcal{S}$ through $P$, 
there are either $(q^{e-1}+1)\gs{d-2}{j}{q}q^{\binom{j}{2}+je}$ generators of $\mathcal{L}$ through $P$ meeting 
$\tau$ in a $(d-j-2)$-space, for all $j=0,\dots, d-2$, or there are no generators of $\mathcal{L}$ through $P$ 
meeting $\tau$ in a $(d-j-2)$-space, for all $j=0,\dots, d-2$.
\end{enumerate}
\end{definition}

The aim of this paper is precisely to show a characterisation of polar spaces $\mathcal{P}'$ 
embedded in a polar space $\mathcal{P}$ of the same rank through the combinatorial and 
geometrical behavior of the set of generators of $\mathcal{P}'$ as subset of the generators 
of $\mathcal{P}$. In other words, if a set of generators of a finite classical polar space behaves 
combinatorially as the set of generators of an embedded polar space of the same rank, is it 
really the set of generators of an embedded polar space? The main theorem of this paper is the following.

\begin{theorem}
Let $\mathcal{P}$ be a finite classical polar space of rank $d$ and with parameter $e\geq1$ over the finite field $\F_q$. 
If $\mathcal{S}$ is a strong pseudopolar set of generators in $\mathcal{P}$, then $\mathcal{S}$ is the set of generators of a 
finite classical polar space of rank $d$ and with parameter $e-1$ embedded in $\mathcal{P}$. 
\end{theorem}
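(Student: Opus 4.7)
The plan is to extract from the combinatorial hypotheses, in order: (a) the cardinality $|\mathcal{S}|$; (b) a strict dichotomy on the point-degrees $s(P):=|\{\pi\in\mathcal{S}:P\in\pi\}|$; (c) the fact that the covered point set $X:=\{P\in\mathcal{P}:s(P)>0\}$ is a geometric hyperplane of $\mathcal{P}$; and (d) an appeal to the classification of geometric hyperplanes of finite classical polar spaces to realise $X$ as a non-tangent hyperplane section, which identifies $\mathcal{S}$ with the generators of the induced embedded polar space. The predicted target numbers come from Lemma~\ref{subspacesonpolar}: a rank-$d$, parameter-$(e-1)$ polar space should have $|\mathcal{S}|=\prod_{i=0}^{d-1}(q^{e-1+i}+1)$ generators, $s_0:=\prod_{i=0}^{d-2}(q^{e-1+i}+1)$ generators through each of its points, and $\gs{d}{1}{q}(q^{d+e-2}+1)$ points.

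For step (a) I would fix $\pi\in\mathcal{S}$ and sum the intersection counts in condition~(i) over $i=0,\dots,d$; via the $q$-identity $\sum_{j}\gs{n}{j}{q}q^{\binom{j}{2}}t^{j}=\prod_{i=0}^{n-1}(1+tq^{i})$ this should collapse to the predicted value of $|\mathcal{S}|$. For step (b), note that any generator of $\mathcal{S}$ through a point $P$ automatically meets a fixed generator $\pi\notin\mathcal{S}$ through $P$ in some $(d-j-2)$-space, $0\le j\le d-2$; summing the counts in~(iii) over $j$ yields exactly $s(P)$. By the same $q$-identity, the non-vanishing alternative in~(iii) evaluates to $s_0$, and together with condition~(ii) this forces $s(P)\in\{0,s_0\}$ for every $P\in\Pi$. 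A double count of incidences $\{(P,\sigma):P\in\sigma\in\mathcal{S}\}$ then delivers $|X|\cdot s_0=|\mathcal{S}|\cdot\gs{d}{1}{q}$, i.e.\ $|X|=\gs{d}{1}{q}(q^{d+e-2}+1)$.

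For step (c), the point is to show that every line $\ell$ of $\mathcal{P}$ either lies in $X$ or meets $X$ in exactly one point. Lines inside a generator of $\mathcal{S}$ lie in $X$; for a line $\ell\subset\pi$ with $\pi\notin\mathcal{S}$, I would consider the pairs $(Q,\sigma)$ with $Q\in\ell$, $\sigma\in\mathcal{S}$ through $Q$, sorted by the dimension of $\pi\cap\sigma$. Combining condition~(i) for the non-$\mathcal{S}$ generator $\pi$ with condition~(iii) applied at each point $Q\in\ell$ with $s(Q)=s_0$ should force $|\ell\cap X|\in\{1,q+1\}$. Once $X$ is established as a geometric hyperplane, step (d) invokes the classification (valid for $d\ge 3$) stating that geometric hyperplanes of finite classical polar spaces arise from hyperplane sections of the ambient projective space; since $X\ne\Pi$ by condition~(ii), the hyperplane is non-tangent, and the induced polar space $\mathcal{P}'$ is of rank $d$, parameter $e-1$, hence one of the four admissible embeddings $\q^{+}(2n-1,q)\subset\q(2n,q)$, $\q(2n,q)\subset\q^{-}(2n+1,q)$, $\h(2n-1,q^{2})\subset\h(2n,q^{2})$, or $\q^{+}(2n+1,q)\subset\w(2n+1,q)$. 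Every generator of $\mathcal{P}'$ lies inside $X$ and so belongs to $\mathcal{S}$, and a cardinality match from step (a) shows the two sets coincide.

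The main obstacle, I expect, is step (c): excluding intermediate intersections $2\le|\ell\cap X|\le q$ requires assembling the intersection data from (i) and (iii) along $\ell$ with care, since the counts in~(i) have two different forms depending on whether the generator in question lies in $\mathcal{S}$, and the counts in~(iii) are only available at points of $X$ through generators not in $\mathcal{S}$. A secondary, more routine difficulty will be the bookkeeping with Gaussian binomial identities in steps (a) and (b), which should nevertheless go through cleanly once the generating-function identity above is applied.
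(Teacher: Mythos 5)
Your overall route (show that the covered point set $X$ is a geometric hyperplane of $\mathcal{P}$ and then classify it) is genuinely different from the paper's, but as it stands it has two real gaps. First, step (c), which you yourself flag as the main obstacle, is only asserted ("should force $|\ell\cap X|\in\{1,q+1\}$"), not proved; since it is the heart of your argument, the proposal is incomplete there. It is in fact provable, and more cleanly than by working line by line: for a generator $\pi\notin\mathcal{S}$, condition (i) with $i=1$ says that exactly $q^{e-1}+1$ members of $\mathcal{S}$ meet $\pi$ in a $(d-2)$-space, while condition (iii) with $j=0$ says that through \emph{every} covered point $Q$ of $\pi$ there are already $q^{e-1}+1$ members of $\mathcal{S}$ meeting $\pi$ in a $(d-2)$-space containing $Q$; hence these are the same $q^{e-1}+1$ generators for every such $Q$, so $X\cap\pi$ is exactly one $(d-2)$-space of $\pi$, and the hyperplane property of $X$ follows at once. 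Without an argument of this kind your plan does not yet yield a proof.

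Second, step (d) leans on a classification statement that is false in one of the very cases the theorem must cover: for $\w(2n+1,q)$ with $q$ even the natural embedding is not universal, the point set of $\w$ is all of $\PG(2n+1,q)$, and the hyperbolic quadric $\q^{+}(2n+1,q)$ is a geometric hyperplane of $\w(2n+1,q)$ that is \emph{not} a hyperplane section of the ambient projective space --- yet this is precisely the configuration a strong pseudopolar set can describe. You would need Ronan's theorem on hyperplanes and universal embeddings together with the known universal embedding of the symplectic space in even characteristic (or a directly quoted classification of the hyperplanes of $\w$), not the blanket statement you invoke. Also, "$X\neq\Pi$, hence the hyperplane is non-tangent" is not a valid inference, since tangent sections are proper subsets too; you must instead use your point count $|X|=\gs{d}{1}{q}(q^{d+e-2}+1)$, which differs from the size $1+q\gs{d-1}{1}{q}(q^{d+e-2}+1)$ of a tangent section. (A small slip in the same step: what is trivially true is $\mathcal{S}\subseteq\{\text{generators of }\mathcal{P}\text{ contained in }X\}$, not the reverse; your cardinality match then closes the gap, so state it that way.) For comparison, the paper never classifies hyperplanes: it takes $\mathcal{P}'=(\mathcal{O},\mathcal{T})$, the covered points and covered subspaces, verifies Tits' axioms directly --- the crucial unique-generator axiom by an averaging count showing that the number of pairs $(Q,\tau)$ with $\tau\in\mathcal{S}$, $\dim(\pi\cap\tau)=d-2$, $Q\in\tau\setminus\pi$ equals $|\mathcal{O}\setminus\pi|$, with at most one $\tau$ per point --- and then cites only Tits' classification of polar spaces of rank at least $3$; this is both more self-contained and shorter than the geometric-hyperplane route, whose payoff would mainly be conceptual (identifying $X$ intrinsically) at the price of heavier external input. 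Your steps (a) and (b) are fine and match the paper's counting lemma.
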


\section{The main theorem}

We start by presenting a well-known result for Gaussian coefficients, the \emph{$q$-binomial theorem}. It is a $q$-analogue of the classical binomial theorem.

\begin{lemma}\label{qbinomial}
	For any prime power $q$, non-negative integer $n$ and $t$ a variable over $\F_{q}$ we have
	\[
	\prod^{n-1}_{l=0}(1+q^{l}t)=\sum^{n}_{l=0}q^{\binom{l}{2}}\gs{n}{l}{q}t^{l}\;.
	\]
\end{lemma}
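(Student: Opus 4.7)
My plan is to prove Lemma \ref{qbinomial} by induction on $n$, using one of the standard Pascal-type recurrences for the Gaussian coefficients, namely
\[
\gs{n+1}{l}{q}=\gs{n}{l}{q}+q^{n-l+1}\gs{n}{l-1}{q},
\]
which holds for all $0\le l\le n+1$ (with the convention $\gs{m}{r}{q}=0$ when $r>m$ or $r<0$, already recalled in the paper). This identity can be taken as known, as it is a classical fact about the Gaussian coefficients and follows for instance from counting $l$-dimensional subspaces of $\F_q^{n+1}$ according to whether or not they are contained in a fixed hyperplane $\F_q^n$.

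For the base case $n=0$, both sides equal $1$: the left-hand side is the empty product, and the right-hand side reduces to the single term $l=0$, which is $q^0\gs{0}{0}{q}t^0=1$. For the inductive step, assume the identity for $n$. Then
\[
\prod_{l=0}^{n}(1+q^{l}t)=(1+q^{n}t)\sum_{l=0}^{n}q^{\binom{l}{2}}\gs{n}{l}{q}t^{l}.
\]
Expanding, collecting powers of $t$, and shifting the index on the second sum gives the coefficient of $t^{l}$ equal to
\[
q^{\binom{l}{2}}\gs{n}{l}{q}+q^{n+\binom{l-1}{2}}\gs{n}{l-1}{q}
=q^{\binom{l}{2}}\Bigl(\gs{n}{l}{q}+q^{n-l+1}\gs{n}{l-1}{q}\Bigr),
\]
where I used $\binom{l-1}{2}-\binom{l}{2}=-(l-1)$ so that $n+\binom{l-1}{2}-\binom{l}{2}=n-l+1$. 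By the Pascal-type recurrence recalled above, the bracketed expression is precisely $\gs{n+1}{l}{q}$, which is exactly the coefficient of $t^{l}$ in the claimed right-hand side for $n+1$. Boundary terms $l=0$ and $l=n+1$ match immediately (using $\binom{n+1}{2}=n+\binom{n}{2}$ and the convention $\gs{n}{n+1}{q}=0$), completing the induction.

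There is essentially no obstacle here: the verification reduces to the single algebraic identity $\binom{l-1}{2}-\binom{l}{2}=-(l-1)$ combined with the standard $q$-Pascal recurrence. The only point requiring any care is the bookkeeping of the exponents of $q$ after multiplying through by $1+q^{n}t$, and choosing the $q$-Pascal recurrence in the correct form (there is a symmetric variant $\gs{n+1}{l}{q}=q^{l}\gs{n}{l}{q}+\gs{n}{l-1}{q}$; the one used above is the version matching the shape of the factor $q^{n}t$ appearing in the product).
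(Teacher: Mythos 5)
Your induction is correct: the base case, the expansion of $(1+q^{n}t)$ against the inductive hypothesis, the exponent bookkeeping via $\binom{l}{2}-\binom{l-1}{2}=l-1$, and the choice of the $q$-Pascal recurrence $\gs{n+1}{l}{q}=\gs{n}{l}{q}+q^{n-l+1}\gs{n}{l-1}{q}$ all check out, and the boundary terms are handled properly. The paper itself offers no proof of this lemma---it is stated as a well-known result ($q$-binomial theorem) and simply used---so there is no argument to compare against; your proof is a standard and complete justification of the cited fact.
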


First we present two counting results for strong pseudopolar sets.

\begin{lemma}\label{aantallen}
	Let $\mathcal{P}$ be a finite classical polar space of rank $d\geq3$ and with parameter $e\geq1$, embedded in a projective space over $\F_{q}$ and let $\mathcal{S}$ be a strong pseudopolar set of generators in $\mathcal{P}$.
	\begin{enumerate}[label=(\roman*)]
		\item $|\mathcal{S}|=\prod_{i=0}^{d-1}(q^{e+i-1}+1)$.
		\item Any point of $\mathcal{P}$ is contained in $0$ or $\prod_{i=0}^{d-2}(q^{e+i-1}+1)$ elements of $\mathcal{S}$.
	\end{enumerate}
\end{lemma}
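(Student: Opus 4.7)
The plan is to derive both counts by a double-counting argument against a fixed generator, combining conditions~(i), (ii), (iii) of Definition~\ref{defgood} with the $q$-binomial theorem (Lemma~\ref{qbinomial}).

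For (i), I fix a generator $\pi\in\mathcal{S}$. Every element of $\mathcal{S}$ meets $\pi$ in a unique subspace (possibly empty), so summing condition~(i) over $i=0,\dots,d$ gives
\[
|\mathcal{S}|=\sum_{i=0}^{d}\left(\gs{d-1}{i-1}{q}+q^{i}\gs{d-1}{i}{q}\right)q^{\binom{i-1}{2}+ie-1}.
\]
I would split this into two sums. In the first, the substitution $j=i-1$ (together with $\gs{d-1}{-1}{q}=0$) and pulling out the common factor $q^{e-1}$ produces $q^{e-1}\sum_{j=0}^{d-1}\gs{d-1}{j}{q}q^{\binom{j}{2}}(q^e)^j$. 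In the second, the identity $\binom{i-1}{2}+i=\binom{i}{2}+1$ collapses the exponent to $\binom{i}{2}+ie$ and produces $\sum_{i=0}^{d-1}\gs{d-1}{i}{q}q^{\binom{i}{2}}(q^e)^i$. Applying Lemma~\ref{qbinomial} with $n=d-1$ and $t=q^e$ to each piece yields $(q^{e-1}+1)\prod_{l=0}^{d-2}(q^{e+l}+1)$, which re-indexes to the claimed value $\prod_{i=0}^{d-1}(q^{e+i-1}+1)$.

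For (ii), I fix a point $P\in\mathcal{P}$. Condition~(ii) furnishes a generator $\pi\notin\mathcal{S}$ through $P$, and every generator of $\mathcal{S}$ through $P$ meets $\pi$ in a $(d-j-2)$-space with $j\in\{0,\dots,d-2\}$ (note that $\pi$ itself is excluded because it does not lie in $\mathcal{S}$). Condition~(iii) then forces the total number of generators of $\mathcal{S}$ through $P$ to be either zero, or
\[
(q^{e-1}+1)\sum_{j=0}^{d-2}\gs{d-2}{j}{q}q^{\binom{j}{2}}(q^e)^j.
\]
Applying Lemma~\ref{qbinomial} with $n=d-2$ and $t=q^e$ collapses the sum to $\prod_{l=0}^{d-3}(q^{e+l}+1)$, so the nonzero value equals $\prod_{i=0}^{d-2}(q^{e+i-1}+1)$, as required.

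The proof is essentially bookkeeping once the identity $\binom{i-1}{2}+i=\binom{i}{2}+1$ is in hand; the main care required is handling the boundary term $i=0$ in part~(i) (using the polynomial convention $\binom{-1}{2}=1$ so that this term correctly accounts for the intersection $\pi\cap\pi=\pi$) and verifying that both telescoped exponents match the specialisation $t=q^e$ of the $q$-binomial theorem.
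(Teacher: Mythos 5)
Your proof is correct, but for part (i) it takes a genuinely different route from the paper. The paper first observes (via condition (ii)) that some generator $\tau\notin\mathcal{S}$ exists, and then sums the $\pi\notin\mathcal{S}$ intersection distribution of condition (i); that sum is already in the form $(q^{e-1}+1)\sum_{i}\gs{d-1}{i}{q}q^{\binom{i}{2}}(q^e)^i$, so a single application of Lemma \ref{qbinomial} finishes it, with no boundary subtleties (the $i=0$ term vanishes because $\gs{d-1}{-1}{q}=0$). You instead sum the $\pi\in\mathcal{S}$ distribution, which forces you to split into two sums, use $\binom{i-1}{2}+i=\binom{i}{2}+1$, apply the $q$-binomial theorem twice, and interpret the $i=0$ term via the convention $\binom{-1}{2}=1$ so that it equals $1$ (counting $\pi$ itself); this is a legitimate reading of Definition \ref{defgood}, and your algebra checks out, but the paper's choice of a generator outside $\mathcal{S}$ avoids the convention issue altogether. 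Two small points you should add: your argument needs $\mathcal{S}\neq\emptyset$ in order to fix $\pi\in\mathcal{S}$, which you never justify --- it follows, e.g., by taking any point of $\mathcal{P}$, using condition (ii) to get a generator $\tau\notin\mathcal{S}$, and applying condition (i) to $\tau$ with $i=1$, which yields $q^{e-1}+1>0$ elements of $\mathcal{S}$; this is essentially the observation with which the paper opens its proof. Your part (ii) coincides with the paper's argument (sum the condition (iii) counts over $j=0,\dots,d-2$ for a generator $\pi\notin\mathcal{S}$ through $P$ and apply Lemma \ref{qbinomial} with $n=d-2$, $t=q^e$), with the helpful extra remark, left implicit in the paper, that every element of $\mathcal{S}$ through $P$ is distinct from $\pi$ and hence meets $\pi$ in a space of dimension between $0$ and $d-2$.
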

\begin{proof}
	\begin{enumerate}[label=(\roman*)]
		\item By condition (ii) of strong pseudopolar sets it is clear that $\mathcal{S}$ cannot contain all generators of $\mathcal{P}$. Let $\tau$ be a generator of $\mathcal{P}$ not contained in $\mathcal{S}$. Counting the elements of $\mathcal{S}$ by intersection dimension with $\tau$, using condition (i) we find that
		\begin{align*}
		|\mathcal{S}|=\sum_{i=1}^{d}(q^{e-1}+1)\gs{d-1}{i-1}{q}q^{\binom{i-1}{2}+(i-1)e}=(q^{e-1}+1)\sum_{i=0}^{d-1}\gs{d-1}{i}{q}q^{\binom{i}{2}}(q^{e})^{i}=\prod_{i=0}^{d-1}(q^{e+i-1}+1)\;.
		\end{align*}
		\item Let $P$ be a point of $\mathcal{P}$. By condition (ii) of strong pseudopolar sets we find a generator containing $P$ not in $\mathcal{S}$. The lemma now follows from condition (iii) since
		\begin{align*}
		\sum_{j=0}^{d-2}(q^{e-1}+1)\gs{d-2}{j}{q}q^{\binom{j}{2}+je}=(q^{e-1}+1)\sum_{j=0}^{d-2}\gs{d-2}{j}{q}q^{\binom{j}{2}}(q^{e})^{j}=(q^{e-1}+1)\prod_{i=0}^{d-3}(q^{e+i}+1)\;.
		\end{align*}
	\end{enumerate}
	In both cases we used Lemma \ref{qbinomial} in the final step.
\end{proof}

\comments{\color{blue}
\begin{lemma}\label{aantal}
	Let $\mathcal{P}$ be a finite classical polar space of rank $d\geq3$ and with parameter $e\geq1$, embedded in a projective space over $\F_{q}$. A good set of generators in $\mathcal{P}$ contains $\prod_{i=0}^{d-1}(q^{e+i-1}+1)$ generators.
\end{lemma}
\begin{proof}
	By applying condition (ii) of good sets it is clear that $\mathcal{S}$ cannot contain all generators of $\mathcal{P}$. Let $\tau$ be a generator of $\mathcal{P}$ not contained in $\mathcal{S}$. Counting the elements of $\mathcal{S}$ by intersection dimension with $\tau$, using condition (i) we find that
	\begin{align*}
	|\mathcal{S}|=\sum_{i=1}^{d}(q^{e-1}+1)\gs{d-1}{i-1}{q}q^{\binom{i-1}{2}+(i-1)e}=(q^{e-1}+1)\sum_{i=0}^{d-1}\gs{d-1}{i}{q}q^{\binom{i}{2}}(q^{e})^{i}=\prod_{i=0}^{d-1}(q^{e+i-1}+1)\;.
	\end{align*}
	In the final step we used Lemma \ref{qbinomial}.
\end{proof}

\begin{lemma}\label{aantaldoorpunt}
	Let $\mathcal{P}$ be a finite classical polar space of rank $d\geq3$ and with parameter $e\geq1$, embedded in a projective space over $\F_{q}$ and let $\mathcal{S}$ be a good set of generators in $\mathcal{P}$. 
\end{lemma}
\begin{proof}
	Let $P$ be a point of $\mathcal{P}$. By condition (ii) of good sets we find a generator containing $P$ not in $\mathcal{S}$. The lemma now follows from condition (iii) since
	\[
	\sum_{j=0}^{d-2}(q^{e-1}+1)\gs{d-2}{j}{q}q^{\binom{j}{2}+je}=(q^{e-1}+1)\sum_{j=0}^{d-2}\gs{d-2}{j}{q}q^{\binom{j}{2}}(q^{e})^{j}=(q^{e-1}+1)\prod_{i=0}^{d-3}(q^{e+i}+1)\;.
	\]
	In the final step we used Lemma \ref{qbinomial}.
\end{proof}
}

Now, we define pseudopolar sets of generators, with similar but different characterisations.

\begin{definition}\label{defnice}
	Let $\mathcal{P}$ be a finite classical polar space of rank $d\geq3$ and with parameter $e\geq1$, embedded in a projective space over $\F_{q}$. A set $\mathcal{S}$ of generators of $\mathcal{P}$ is called \emph{pseudopolar} if
	\begin{itemize}
		\item[(i)] the number of elements of $\mathcal{S}$ meeting a given generator of $\mathcal{S}$ in a $(d-2)$-space equals $\gs{d}{1}{q}q^{e-1}$;
		\item[(ii)] the number of elements of $\mathcal{S}$ disjoint to a given generator of $\mathcal{S}$ is nonzero;
		\item[(iii)] $|\mathcal{S}|=\prod_{i=0}^{d-1}(q^{e+i-1}+1)$;
		\item[(iv)] any point of $\mathcal{P}$ is contained in $0$ or $\prod_{i=0}^{d-2}(q^{e+i-1}+1)$ elements of $\mathcal{S}$.
	\end{itemize}
\end{definition}

\comments{
\begin{definition}\label{defkind}
	Let $\mathcal{P}$ be a finite classical polar space of rank $d\geq3$ and with parameter $e\geq1$, embedded in a projective space over $\F_{q}$. A set $\mathcal{S}$ of generators of $\mathcal{P}$ is called \emph{kind} if
	\begin{itemize}
		 \item[(i)] the number of elements of $\mathcal{S}$ meeting a given generator of $\mathcal{S}$ in a $(d-2)$-space equals $\gs{d}{1}{q}q^{e-1}$;
		 \item[(ii)] the number of elements of $\mathcal{S}$ disjoint to a given generator of $\mathcal{S}$ is nonzero;
		 \item[(iii)] the number of elements of $\mathcal{S}$ meeting a given generator not in $\mathcal{S}$ in a $(d-2)$-space equals $q^{e-1}+1$;
		 \item[(iv)] for every point $P$ of $\mathcal{P}$ and every generator $\pi\notin\mathcal{S}$ through $P$, either for all $j=0,\dots,d-2$ the number of generators of $\mathcal{S}$ through $P$ meeting $\tau$ in a $j$-space is non-zero and it equals $q^{e+1}+1$ if $j=d-2$, or else for all $j=0,\dots,d-2$ it is zero.
	\end{itemize}
\end{definition}}

For a set of generators being \comments{kind or }pseudopolar is a consequence of being strong pseudopolar. We will show this in detail.

\begin{lemma}\label{goodnicekind}
	Let $\mathcal{P}$ be a finite classical polar space of rank $d\geq3$ and with parameter $e\geq1$, embedded in a projective space over $\F_{q}$. If $\mathcal{S}$ is a strong pseudopolar set of generators in $\mathcal{P}$, then $\mathcal{S}$ is also a pseudopolar \comments{and a kind }set of generators.
\end{lemma}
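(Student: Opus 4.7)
The plan is to verify each of the four pseudopolar conditions in turn, observing that two of them have already been established and the remaining two follow directly from specializations of the strong pseudopolar intersection formula combined with a small $q$-binomial identity.

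First, conditions (iii) and (iv) of Definition \ref{defnice} are exactly the statements of parts (i) and (ii) of Lemma \ref{aantallen}, which was already proved, so nothing more is needed there.

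Next, for pseudopolar condition (i), I would specialize the strong pseudopolar intersection formula to the case $\pi\in\mathcal{S}$, $i=1$, which counts precisely the generators of $\mathcal{S}$ meeting $\pi$ in a $(d-2)$-space. This yields
\[
\left(\gs{d-1}{0}{q}+q\gs{d-1}{1}{q}\right)q^{\binom{0}{2}+e-1}=\bigl(1+q\gs{d-1}{1}{q}\bigr)q^{e-1}.
\]
Using the elementary identity $\gs{d}{1}{q}=1+q+q^{2}+\cdots+q^{d-1}=1+q\gs{d-1}{1}{q}$, this simplifies to $\gs{d}{1}{q}q^{e-1}$, as required.

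Finally, for pseudopolar condition (ii), I would specialize the same formula to $\pi\in\mathcal{S}$, $i=d$, which counts the generators of $\mathcal{S}$ meeting $\pi$ in a $(-1)$-space, i.e.\ the generators of $\mathcal{S}$ disjoint from $\pi$. Since $\gs{d-1}{d-1}{q}=1$ and $\gs{d-1}{d}{q}=0$, the expression collapses to $q^{\binom{d-1}{2}+de-1}$, which is a positive power of $q$ (as $d\geq 3$ and $e\geq 1$), hence nonzero. There is no real obstacle here; the proof is essentially an identification of what each of the four conditions asks for and a direct evaluation of the strong pseudopolar formula at the appropriate value of $i$.
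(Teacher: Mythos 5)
Your proposal is correct and follows the paper's proof exactly: conditions (i) and (ii) of pseudopolar sets are obtained by specializing the strong pseudopolar count to $\pi\in\mathcal{S}$ with $i=1$ and $i=d$ respectively, and conditions (iii) and (iv) are precisely Lemma \ref{aantallen}. The only difference is that you spell out the simplification $\bigl(1+q\gs{d-1}{1}{q}\bigr)q^{e-1}=\gs{d}{1}{q}q^{e-1}$, which the paper leaves implicit.
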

\begin{proof}
	The first condition for \comments{both kind and }pseudopolar sets follows from condition (i) of strong pseudopolar sets for $i=1$ and $\pi\in\mathcal{S}$. The second condition for \comments{both kind and }pseudopolar sets also follows from condition (i) of strong pseudopolar sets, now applied for $i=d$ and $\pi\in\mathcal{S}$: we find that there are $q^{\binom{d-1}{2}+de-1}>0$ generators in $\mathcal{S}$ disjoint to a given generator.
	\par The third and fourth condition for pseudopolar sets hold for strong pseudopolar sets thanks to Lemma \ref{aantallen}.\comments{The third condition of kind sets is a direct consequence of condition (i) of strong pseudopolar sets, applied for $i=1$ and $\pi\notin\mathcal{S}$. The fourth condition of kind sets is a straightforward weakening of condition (iii) of strong pseudopolar sets.}
\end{proof}

\comments{Based on the previous theorem, we present two different approaches to prove the main result. We will prove that both nice sets and kind sets are embedded polar spaces. In both proofs the four axioms of a polar space are verified. In both cases for three of them the verification is easy. The differences between nice and kind sets lead to different verifications of the last axiom, which is non-trivial to check.}

We now prove that pseudopolar sets are embedded polar spaces. Thanks to the previous lemma this is sufficient to prove the main theorem.

\begin{theorem}\label{niceembedded}
	Let $\mathcal{P}$ be a finite classical polar space of rank $d\geq3$ and with parameter $e\geq1$, embedded in a projective space over $\F_{q}$. If $\mathcal{S}$ is a pseudopolar set of generators in $\mathcal{P}$, then $\mathcal{S}$ is the set of generators of a classical polar space of rank $d$ and with parameter $e-1$ embedded in $\mathcal{P}$.
\end{theorem}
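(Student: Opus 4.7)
The plan is to verify the four axioms of Definition~\ref{defpolarspace} directly on the incidence geometry $(\Pi',\Omega')$, where $\Pi'$ is the set of points of $\mathcal{P}$ lying on at least one generator of $\mathcal{S}$ and $\Omega'$ is the set of singular subspaces of $\mathcal{P}$ contained in some element of $\mathcal{S}$. Under this choice the $(d-1)$-dimensional elements of $\Omega'$ are exactly the members of $\mathcal{S}$, which is what one needs from an embedded polar space whose generator set is to be $\mathcal{S}$.

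Axioms~(1), (2) and~(4) come essentially for free. Any $\omega\in\Omega'$ lies in some $\pi\in\mathcal{S}$, and the $\Omega'$-elements contained in $\omega$ are exactly the subspaces of $\omega$, yielding a projective geometry of dimension at most $d-1$; the intersection of $\omega_1\subseteq\pi_1$ and $\omega_2\subseteq\pi_2$ still lies in $\pi_1\in\mathcal{S}$, so $\Omega'$ is closed under intersection; and the existence of two disjoint generators in $\mathcal{S}$ is precisely condition~(ii) of pseudopolar sets. All the real content is therefore concentrated in axiom~(3), which asks, for every $P\in\Pi'$ and every $\pi\in\mathcal{S}$ with $P\notin\pi$, for the existence and uniqueness of a $\pi'\in\mathcal{S}$ through $P$ meeting $\pi$ in a hyperplane of $\pi$.

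For axiom~(3) the key observation is that the ambient $\mathcal{P}$ already satisfies axiom~(3), so there is a unique generator $\pi^{*}$ of $\mathcal{P}$ through $P$ with $\pi\cap\pi^{*}$ a hyperplane of $\pi$. Uniqueness of $\pi'\in\mathcal{S}$ is thus automatic, and the problem reduces to showing $\pi^{*}\in\mathcal{S}$ for every admissible pair $(P,\pi)$. I would do this by a double count of triples $(P,\pi,\pi')$ with $\pi,\pi'\in\mathcal{S}$, $P\in\pi'\setminus\pi$, and $\pi\cap\pi'$ a hyperplane of $\pi$. A preliminary flag count on $\{(P,\pi)\colon P\in\pi\in\mathcal{S}\}$ based on conditions~(iii) and~(iv) gives $|\Pi'|=\gs{d}{1}{q}(q^{d+e-2}+1)$, which already matches the point count of a polar space of parameter $e-1$ via Lemma~\ref{subspacesonpolar}. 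Counting the triples by $(\pi,\pi')$ through condition~(i) of pseudopolar yields $|\mathcal{S}|\cdot\gs{d}{1}{q}q^{e-1}\cdot q^{d-1}$; counting them by $(P,\pi)$ yields at most $|\mathcal{S}|(|\Pi'|-\gs{d}{1}{q})$, which the formula for $|\Pi'|$ shows equals precisely the same value. Equality forces exactly one $\pi'$ per pair $(P,\pi)$, which gives axiom~(3); the auxiliary clause identifying $\pi\cap\pi^{*}$ with $\{Q\in\pi:PQ\in\Omega'\}$ then follows because a line $PQ$ is singular in $\mathcal{P}$ if and only if $Q\in P^{\perp}\cap\pi=\pi\cap\pi^{*}$.

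Once the four axioms are in place, $(\Pi',\Omega')$ is a polar space of rank $d$ embedded in $\mathcal{P}$, and the parameter is identified as $e-1$ by comparing $|\mathcal{S}|=\prod_{i=0}^{d-1}(q^{e+i-1}+1)$ from condition~(iii) with Lemma~\ref{subspacesonpolar}. The main obstacle, as anticipated, is axiom~(3), or equivalently the identification $\pi^{*}\in\mathcal{S}$: the proof goes through only because the pseudopolar conditions are calibrated so tightly that the two sides of the double count agree on the nose, leaving no slack for a ``missing'' generator.
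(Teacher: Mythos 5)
Your proposal is correct and follows essentially the same route as the paper: the same incidence geometry $(\Pi',\Omega')$, the same easy verification of axioms (1), (2), (4), the same flag count giving $|\Pi'|=\gs{d}{1}{q}(q^{d+e-2}+1)$, and the same double count (yours summed over all $\pi\in\mathcal{S}$, the paper's for a fixed $\pi$) combined with axiom (3) of the ambient $\mathcal{P}$ to force exactly one generator of $\mathcal{S}$ through each point off $\pi$ meeting $\pi$ in a hyperplane. The only cosmetic differences are that you read off the parameter from $|\mathcal{S}|$ rather than from $|\Pi'|$ and treat the second clause of axiom (3) explicitly, while the paper additionally cites Tits' classification to conclude the resulting rank-$d$ polar space is classical.
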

\begin{proof}
	Let $\mathcal{O}$ be the set of all points in $\mathcal{P}$ that are contained in at least one element of $\mathcal{S}$ and let $\mathcal{T}$ be the set of all subspaces in $\mathcal{P}$ that are contained in at least one element of $\mathcal{S}$. It is clear that the sets $\mathcal{O}$ and $\mathcal{S}$ are subsets of $\mathcal{T}$, as is the empty set. We define $\mathcal{P}'=(\mathcal{O},\mathcal{T})$. We will prove that $\mathcal{P}'$ is a polar space of rank $d$ using Definition \ref{defpolarspace}.
	It is immediate that $\mathcal{P}'$ fulfils axioms (1) and (2): they are inherited from $\mathcal{P}$. Note that the elements of $\mathcal{T}$ that have dimension $d-1$ are precisely the elements of $\mathcal{S}$. By condition (ii) of pseudopolar sets we know that the number of elements of $\mathcal{S}$ disjoint to a given element of $\mathcal{S}$ is nonzero. Hence, axiom (4) is also fulfilled.
	\par We count the number of points in $\mathcal{O}$. We perform a double counting on the set $\{(P,\pi)\mid P\in\mathcal{O},\pi\in\mathcal{S},P\in\pi \}$. Using conditions (iii) and (iv) of pseudopolar sets we find that
	\[
	|\mathcal{O}|=\frac{\gs{d}{1}{q}\prod_{i=0}^{d-1}(q^{e+i-1}+1)}{\prod_{i=0}^{d-2}(q^{e+i-1}+1)}=\gs{d}{1}{q}(q^{e+d-2}+1)\;.
	\]
	\par Now, let $\pi\in\mathcal{S}$ be a generator. We count the number of tuples in the set $E=\{(Q,\tau)\mid Q\in\mathcal{O}\setminus\pi,\tau\in\mathcal{S},\dim(\pi\cap\tau)=d-2,Q\in\tau\}$. Using condition (i) of pseudopolar sets, we find that
	\[
	|E|=\gs{d}{1}{q}q^{e-1}\cdot q^{d-1}=\gs{d}{1}{q}q^{d+e-2}
	\]
	So, $|E|=|\mathcal{O}\setminus\pi|$. Consequently, for the points in $\mathcal{O}$ not in $\pi$ there is on average one generator in $\mathcal{S}$ through it meeting $\pi$ in a $(d-2)$-space. However, by axiom (3) applied for $\mathcal{P}$, for every point in $\mathcal{O}$ there is at most one generator in $\mathcal{S}$ through it meeting $\pi$ in a $(d-2)$-space. We find that through every point of $\mathcal{O}$ there is precisely one generator in $\mathcal{S}$ meeting $\pi$ in a $(d-2)$-space. This proves axiom (3).
	\par We conclude that $\mathcal{P}'$ is a polar space of rank $d$. Comparing Theorem \ref{subspacesonpolar} and $|\mathcal{O}|$ we find immediately that the parameter of $\mathcal{P}'$ equals $e-1$. By the aforementioned result of Tits (see \cite{tits}) the polar space $\mathcal{P}'$ is classical.
\end{proof}

\comments{Next we will prove that also kind sets are embedded polar spaces. We start with a lemma.

\begin{lemma}\label{d-2ruimtes}
	Let $\mathcal{P}$ be a finite classical polar space of rank $d\geq3$ and with parameter $e\geq1$, embedded in a projective space over $\F_{q}$ and let $\mathcal{S}$ be a kind set of generators in $\mathcal{P}$. Any $(d-2)$-space of $\mathcal{P}$ is contained in $0$ or $q^{e-1}+1$ elements of $\mathcal{S}$. Moreover, a generator of $\mathcal{P}$ not contained in $\mathcal{S}$ contains precisely one $(d-2)$-space that is contained in $q^{e-1}+1$ elements of $\mathcal{S}$.
\end{lemma}
\begin{proof}
	Let $\tau$ be a generator not contained in $\mathcal{S}$. By condition (iii) of kind sets the set $\mathcal{S}_{d-2}$ of generators in $\mathcal{S}$ meeting $\tau$ in a $(d-2)$-space has cardinality  $q^{e-1}+1$. By condition (iv) of kind sets any point in $\tau$ is contained in 0 or $q^{e-1}+1$ generators in $\mathcal{S}$. It follows immediately that all generators in $\mathcal{S}_{d-2}$ meet $\tau$ in the same $(d-2)$-space. So, there is one $(d-2)$-space in $\tau$ that is contained in $q^{e-1}+1$ generators of $\mathcal{S}$ and all other $(d-2)$-spaces are contained in no generators of $\mathcal{S}$.
	\par Now, let $\pi$ be a generator contained in $\mathcal{S}$ and let $\sigma$ be a $(d-2)$-space in it. If there is a generator through $\sigma$ that is not contained in $\mathcal{S}$, then there are $q^{e-1}+1$ generators of $\mathcal{S}$ through $\sigma$. Otherwise all $q^{e}+1$ generators through $\sigma$ are contained in $\mathcal{S}$. Say there are $a$ different $(d-2)$-spaces in $\pi$ such that all generators through it are contained in $\sigma$. By condition (i) of kind sets there are $q^{e-1}\gs{d}{1}{q}$ generators in $\mathcal{S}$ meeting $\tau$ in a $(d-2)$-space. We find that
	\begin{align*}
	q^{e-1}\gs{d}{1}{q}=aq^{e}+\left(\gs{d}{1}{q}-a\right)q^{e-1}\quad\Leftrightarrow\quad a=0\;.
	\end{align*}
	Hence, through all $(d-2)$-spaces in $\pi$ there are $q^{e-1}+1$ generators of $\mathcal{S}$.
\end{proof}

\begin{theorem}\label{kindembedded}
	Let $\mathcal{P}$ be a finite classical polar space of rank $d\geq3$ and with parameter $e\geq1$, embedded in a projective space over $\F_{q}$. If $\mathcal{S}$ is a kind set of generators in $\mathcal{P}$, then $\mathcal{S}$ is the set of generators of a classical polar space of rank $d$ and with parameter $e-1$ embedded in $\mathcal{P}$.
\end{theorem}
\begin{proof}
	We start the proof in the same way as the proof of Theorem \ref{niceembedded}. Let $\mathcal{O}$ be the set of all points in $\mathcal{P}$ that are contained in at least one element of $\mathcal{S}$ and let $\mathcal{T}$ be the set of all subspaces in $\mathcal{P}$ that are contained in at least one element of $\mathcal{S}$. It is clear that the sets $\mathcal{O}$ and $\mathcal{S}$ are subsets of $\mathcal{T}$, as is the empty set. We define $\mathcal{P}'=(\mathcal{O},\mathcal{T})$. We will prove that $\mathcal{P}'$ is a polar space of rank $d$ using Definition \ref{defpolarspace}.
	\par It is immediate that $\mathcal{P}'$ fulfils axioms (1) and (2): they are inherited from $\mathcal{P}$. Note that the elements of $\mathcal{T}$ that have dimension $d-1$ are precisely the elements of $\mathcal{S}$. By condition (ii) of kind sets we know that the number of elements of $\mathcal{S}$ disjoint to a given element of $\mathcal{S}$ is nonzero. Hence, axiom (4) is also fulfilled.
	\par Now, let $P\in\mathcal{O}$ and $\pi\in\mathcal{S}$ be a point and a generator such that $P$ is not contained in $\pi$. By axiom (3) applied on $\mathcal{P}$ there is a generator $\tau$ of $\mathcal{P}$ containing $P$ and meeting $\pi$ in a $(d-2)$-space $\sigma=\pi\cap\tau$. If $\tau\in\mathcal{S}$ the axiom is fulfilled, so assume $\tau\notin\mathcal{S}$. By Lemma \ref{d-2ruimtes} the generator $\tau$ contains a unique $(d-2)$-space that is contained in $q^{e-1}+1$ elements of $\mathcal{S}$, necessarily equal to $\sigma$. All other $(d-2)$-spaces in $\tau$ are not contained in an element of $\mathcal{S}$. Since $P\in\mathcal{O}$, there is a generator $\pi'\in\mathcal{S}$ containing $P$, and hence meeting $\tau$ in a $k$-space for some $k$, $0\leq k\leq d-2$. By condition (iv) of kind sets, there must be a generator in $\mathcal{S}$ meeting $\tau$ in a $(d-2)$-space $\sigma'$ through $P$. Clearly, $\sigma\neq\sigma'$ as the latter contains $P$ but the former does not. Consequently, the existence of $\sigma'$ contradicts the statement that all $(d-2)$-spaces in $\tau$ different from $\sigma$ are not contained in an element of $\mathcal{S}$. Hence, the assumption $\tau\notin\mathcal{S}$ is wrong, and axiom (3) is proven.
	\par We conclude that $\mathcal{P}'$ is a polar space of rank $d$. From Lemma \ref{d-2ruimtes} it immediately follows that its parameter is $e-1$. By the aforementioned result of Tits (see \cite{tits}) the polar space $\mathcal{P}'$ is classical.
\end{proof}}

As a corollary we find the main result of this article, the one that motivated this research.

\begin{corollary}
	Let $\mathcal{P}$ be a finite classical polar space of rank $d\geq3$ and with parameter $e\geq1$, embedded in a projective space over $\F_{q}$. If $\mathcal{S}$ is a strong pseudopolar set of generators in $\mathcal{P}$, then $\mathcal{S}$ is the set of generators of a classical polar space of rank $d$ and with parameter $e-1$ embedded in $\mathcal{P}$.
\end{corollary}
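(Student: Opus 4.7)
The plan is simply to chain together the two results already established earlier in the paper, so there is essentially nothing new to prove. First I would invoke Lemma \ref{goodnicekind}, which states that every strong pseudopolar set of generators in $\mathcal{P}$ is, in particular, a pseudopolar set of generators. Second, Theorem \ref{niceembedded} is formulated precisely under this weaker hypothesis and delivers exactly the desired conclusion: the set $\mathcal{S}$ is the set of generators of a classical polar space of rank $d$ and parameter $e-1$ embedded in $\mathcal{P}$.

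The standing hypotheses on $\mathcal{P}$ (classical, rank $d\geq 3$, parameter $e\geq 1$, ambient field $\F_q$) are identical in all three statements, so no adjustment of parameters or reformulation is needed and the chain of implications goes through verbatim. There is therefore no real obstacle here: the substantive geometric content sits in Theorem \ref{niceembedded}, whose proof inherits axioms (1), (2), (4) of Definition \ref{defpolarspace} from the ambient polar space $\mathcal{P}$ and obtains axiom (3) via a double-count using condition (i) of pseudopolar sets. Lemma \ref{goodnicekind} plays the role of a bridge, reading off the four numerical conditions of pseudopolarity from the richer intersection data of a strong pseudopolar set, with conditions (iii) and (iv) supplied by Lemma \ref{aantallen}. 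Consequently the proof of the corollary is a one-liner of the form: by Lemma \ref{goodnicekind} the set $\mathcal{S}$ is pseudopolar, and then Theorem \ref{niceembedded} yields the claim.
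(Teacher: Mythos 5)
Your proposal is correct and is exactly the paper's own proof: the corollary is obtained by applying Lemma \ref{goodnicekind} to conclude that $\mathcal{S}$ is pseudopolar and then invoking Theorem \ref{niceembedded}. No further comment is needed.
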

\begin{proof}
	Considering Lemma \ref{goodnicekind} this corollary follows from Theorem \ref{niceembedded}\comments{ as well as from Theorem \ref{kindembedded}}.
\end{proof}

Looking at the previous results we can see that actually we proved a characterisation result for embedded polar spaces of the same rank and with parameter one less.

\begin{theorem}
	Let $\mathcal{P}$ be a finite classical polar space of rank $d\geq3$ and with parameter $e\geq1$, embedded in a projective space over $\F_{q}$, and let $\mathcal{S}$ be a set of generators in $\mathcal{P}$. The four following statements are equivalent.
	\begin{itemize}
		\item[(1)] $\mathcal{S}$ is strong pseudopolar.
		\item[(2)] $\mathcal{S}$ is pseudopolar.
		%\item[(3)] $\mathcal{S}$ is kind.
		\item[(3)] $\mathcal{S}$ is the set of generators of a classical polar space of rank $d$ and with parameter $e-1$ embedded in $\mathcal{P}$.
	\end{itemize}
\end{theorem}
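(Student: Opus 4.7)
The theorem asserts the equivalence of the three statements. Since Lemma~\ref{goodnicekind} gives (1)$\Rightarrow$(2) and Theorem~\ref{niceembedded} gives (2)$\Rightarrow$(3), the plan is to close the cycle by proving (3)$\Rightarrow$(1). Assume therefore that $\mathcal{S}$ is the set of generators of a classical polar space $\mathcal{P}'$ of rank $d$ and parameter $e-1$ embedded in $\mathcal{P}$. The key geometric observation is that, in every case enumerated in the introduction, any generator $\pi\notin\mathcal{S}$ of $\mathcal{P}$ meets $\mathcal{P}'$ in a hyperplane $\tau$ of $\pi$ --- immediate when $\mathcal{P}'=\mathcal{P}\cap H$ for a non-tangent hyperplane $H$, and verified by a short computation with the quadratic form in the characteristic-two case $\q^+(2n+1,q)\subset\w(2n+1,q)$. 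With this in hand we verify the three conditions of Definition~\ref{defgood} in turn.

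Condition (ii) follows from Lemma~\ref{subspacesonpolar}: through any point of $\mathcal{P}'$ there are strictly more generators of $\mathcal{P}$ than of $\mathcal{P}'$ (since $e\geq 1$), and any point outside $\mathcal{P}'$ lies in no element of $\mathcal{S}$ whatsoever. For condition (i), fix a generator $\pi$ of $\mathcal{P}$ and $i\in\{0,\dots,d\}$. If $\pi\in\mathcal{S}$, the count is a standard intersection number inside $\mathcal{P}'$: for each of the $\gs{d}{i}{q}$ subspaces $\sigma$ of $\pi$ of dimension $d-i-1$, the quotient polar space $\sigma^{\perp}/\sigma$ has rank $i$ and parameter $e-1$, and the generators of this quotient disjoint from $\pi/\sigma$ number $q^{\binom{i}{2}+i(e-1)}$; the product $\gs{d}{i}{q}\,q^{\binom{i}{2}+i(e-1)}$ then rewrites as $\bigl(\gs{d-1}{i-1}{q}+q^{i}\gs{d-1}{i}{q}\bigr)q^{\binom{i-1}{2}+ie-1}$ via the $q$-Pascal identity $\gs{d}{i}{q}=\gs{d-1}{i-1}{q}+q^{i}\gs{d-1}{i}{q}$ and the arithmetic identity $\binom{i}{2}-i=\binom{i-1}{2}-1$. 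If $\pi\notin\mathcal{S}$, every $\pi'\in\mathcal{S}$ satisfies $\pi'\cap\pi=\pi'\cap\tau$, so the $i=0$ contribution vanishes; for $i\geq 1$ we count, for each of the $\gs{d-1}{i-1}{q}$ $(d-i-1)$-subspaces $\sigma$ of $\tau$, the generators of $\mathcal{P}'$ through $\sigma$ that are projectively disjoint from $\tau/\sigma$ in the rank-$i$, parameter-$(e-1)$ quotient. This number equals $(q^{e-1}+1)q^{\binom{i-1}{2}+(i-1)e}$, as verified by induction on $i$ (or by a short inclusion--exclusion using Lemma~\ref{subspacesonpolar}).

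Condition (iii) is handled analogously. If $P\notin\mathcal{P}'$ the ``all zero'' alternative is trivial; if $P\in\mathcal{P}'$ then $P\in\tau$, and passing to the quotient $P^{\perp}/P$ inside $\mathcal{P}'$, a polar space of rank $d-1$ and parameter $e-1$, reduces the count to the $\pi\notin\mathcal{S}$ subcase of condition (i) applied in this quotient with $\tau/P$ replacing $\tau$, and this yields precisely $(q^{e-1}+1)\gs{d-2}{j}{q}q^{\binom{j}{2}+je}$. The main obstacle is the bookkeeping in these nested quotient arguments: at each step one must correctly identify the rank and parameter of the quotient polar space, translate subspace-containment counts via Lemma~\ref{subspacesonpolar} into products of the form $\prod(q^{a+i}+1)$, and collapse the resulting sums using the $q$-binomial theorem (Lemma~\ref{qbinomial}) and $q$-Pascal so that the exponents match those in Definition~\ref{defgood} exactly. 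Conceptually nothing is deep, but the exponents must line up without error.
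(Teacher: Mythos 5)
Your proposal is correct and follows essentially the same route as the paper: the cycle $(1)\Rightarrow(2)$ via Lemma~\ref{goodnicekind}, $(2)\Rightarrow(3)$ via Theorem~\ref{niceembedded}, and $(3)\Rightarrow(1)$ by recognising $\mathcal{P}'$ as a (non-tangent) hyperplane section and verifying the strong pseudopolar conditions by counting. The only differences are cosmetic: the paper dispatches the symplectic case through the isomorphism $\mathcal{Q}(2d,q)\cong\mathcal{W}(2d-1,q)$ for $q$ even and cites the intersection counts as well-known, whereas you treat the characteristic-two case directly via the quadratic form and carry out the quotient-space counts explicitly (and correctly).
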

\begin{proof}
	The assertion $(1)\implies(2)$ \comments{and $(1)\implies(3)$ are }is in Lemma \ref{goodnicekind} and the assertion $(2)\implies(3)$ is Theorem \ref{niceembedded}\comments{, while the assertion $(3)\implies(4)$ is Theorem \ref{kindembedded}}. We conclude this proof by showing the assertion $(3)\implies (1)$.
	\par We assume that (3) is true. Let $\mathcal{P}'$ be the polar space defined by $\mathcal{S}$. Both $\mathcal{P}$ and $\mathcal{P}'$ are classical. So, we have one of the following possibilities:
	\begin{center}
		\centering
		\begin{tabular}{c||c|c|c|c}
			$\mathcal{P}$ & $\mathcal{Q}(2d,q)$ & $\mathcal{W}(2d-1,q)$, $q$ even & $\mathcal{H}(2d,q^{2})$ & $\mathcal{Q}^{-}(2d+1,q)$ \\\hline
			$\mathcal{P}'$ & $\mathcal{Q}^{+}(2d-1,q)$ & $\mathcal{Q}^{+}(2d-1,q)$, $q$ even & $\mathcal{H}(2d-1,q^{2})$ & $\mathcal{Q}(2d,q)$
		\end{tabular}
	\end{center}
	The second case, the embedding of $\mathcal{Q}^{+}(2d-1,q)$ in $\mathcal{W}(2d-1,q)$ for $q$ even, arises from the embedding of $\mathcal{Q}^{+}(2d-1,q)$ in $\mathcal{Q}(2d,q)$, $q$ even, since $\mathcal{Q}(2d,q)$ and $\mathcal{W}(2d-1,q)$ are isomorphic as polar space for $q$ even (we refer to \cite[Chapter 22]{Hirschfeld} and \cite[Chapter 11]{tay} for more details). So, we only have to deal with the three remaining cases. In all these cases, the embedded polar space $\mathcal{P}'$ can be found by intersecting $\mathcal{P}$ with a non-tangent hyperplane $\alpha$ of the ambient projective space. The generators of $\mathcal{P}'$, i.e. the set $\mathcal{S}$, are the generators of $\mathcal{P}$ contained in $\alpha$. The generators of $\mathcal{P}$ that are not in $\mathcal{S}$ meet $\alpha$ in a $(d-2)$-space. In the same way, the points of $\mathcal{P}'$ are the points of $\mathcal{P}$ contained in $\alpha$, while the points of $\mathcal{P}$ that are not in $\mathcal{P}'$ are the points not in $\alpha$. Keeping this in mind, checking conditions (i) and (iii) of strong pseudopolar sets are well-known counting results for finite classical polar spaces (see e.g.~\cite{bcn} or \cite{Hirschfeld}). Condition (ii) follows from Lemma \ref{subspacesonpolar} as $\prod^{d-1}_{i=0}(q^{e+i}+1)>\prod^{d-2}_{i=0}(q^{e+i}+1)$.
\end{proof}

\begin{remark}
	It is clear, as we saw in Lemma \ref{goodnicekind} that the definition of pseudopolar \comments{and kind }sets are weakened versions of the definition of strong pseudopolar sets. However, we can also prove the stronger conditions from the weaker ones. One could wonder whether there are other ways to weaken the conditions of strong pseudopolar sets and also find an equivalent definition. \par This can indeed be done. Probably there are a lot of possibilities; we will present here one (without the proofs). One could keep conditions (i) and (ii) of pseudopolar sets (see Definition \ref{defnice}) and replace (iii) and (iv) by
	\begin{itemize}
		\item[(iii')] the number of elements of $\mathcal{S}$ meeting a given generator not in $\mathcal{S}$ in a $(d-2)$-space equals $q^{e-1}+1$;
		\item[(iv')] for every point $P$ of $\mathcal{P}$ and every generator $\pi\notin\mathcal{S}$ through $P$, either for all $j=0,\dots,d-2$ the number of generators of $\mathcal{S}$ through $P$ meeting $\tau$ in a $j$-space is non-zero and it equals $q^{e+1}+1$ if $j=d-2$, or else for all $j=0,\dots,d-2$ it is zero.
	\end{itemize}
\end{remark}

\begin{remark}
	We could consider the set of generators of a polar space of rank $d$ as a subset of the set of $(d-1)$-spaces of the ambient projective space instead of as generators of larger polar space. We could wonder whether it is possible to characterise a set of $(d-1)$-spaces in a projective space as the set of generators in a way similar to the one described in Definitions \ref{defgood} or \ref{defnice}\comments{ or \ref{defkind}}. For strong pseudopolar \comments{and kind }sets it is easy to see that an analogue cannot be made as it requires a statement about the $(d-1)$-spaces not in the set, and we know that not all $(d-1)$-spaces meet the polar space in the same way.
	\par For pseudopolar sets, however, it is possible to make a projective analogue. For given $d$ and $e$, we could call a set $\mathcal{S}$ of $(d-1)$-spaces in a projective space of dimension at least $2d+e-1$ pseudopolar if  
	\begin{itemize}
		\item[(i)] the number of elements of $\mathcal{S}$ meeting a given element of $\mathcal{S}$ in a $(d-2)$-space equals $\gs{d}{1}{q}q^{e-1}$;
		\item[(ii)] the number of elements of $\mathcal{S}$ disjoint to a given element of $\mathcal{S}$ is nonzero;
		\item[(iii)] $|\mathcal{S}|=\prod_{i=0}^{d-1}(q^{e+i-1}+1)$;
		\item[(iv)] any point in the projective space is contained in $0$ or $\prod_{i=0}^{d-2}(q^{e+i-1}+1)$ elements of $\mathcal{S}$.
	\end{itemize}
	The generator set of a polar space of rank $d$ and with parameter $e$ is clearly an example of a pseudopolar set, but we will give another example.
	\par Let $\pi_{1}$ and $\pi_{2}$ be two planes in $\PG(n,q)$, $n\geq4$ and $q$ even, meeting in a line $\ell$, and let $\mathcal{H}_{i}$ be a dual hyperoval in $\pi_{i}$ with $\ell$ as one of its $q+2$ lines, $i=1,2$. Then $(\mathcal{H}_{1}\setminus\{\ell\})\cup(\mathcal{H}_{2}\setminus\{\ell\})$ is a pseudopolar set for $d=2$ and $e=1$. It is however not the set of generators of a polar space. Note that the `classical' example is a $\mathcal{Q}^{+}(3,q)$, and that is spans also a 3-space in $\PG(n,q)$. 
	\par A classification of pseudopolar sets in projective spaces is therefore different from the classification for polar spaces. We state it here as an open problem.
\end{remark}

\paragraph*{Acknowledgement:} The research of Maarten De Boeck is supported by the BOF--UGent (Special Research Fund of Ghent University).

\end{document}